\documentclass[12pt]{article}
   
\usepackage{amsmath,enumerate, amsthm}
\usepackage{amsfonts}
\usepackage{amssymb}

\setlength{\topmargin}{-.5in}
\setlength{\textheight}{9in}
\setlength{\oddsidemargin}{.125in}
\setlength{\textwidth}{6.25in} 

\setcounter{MaxMatrixCols}{30}

\DeclareMathOperator{\ex}{ex}
\newcommand{\ind}{\text{-}\mathrm{ind}}
\newcommand{\exi}[3]{\ex(#1, \{#2, #3\ind\})}

\newtheorem{theorem}{Theorem}[section]

\newtheorem{corollary}[theorem]{Corollary}

\newtheorem{lemma}[theorem]{Lemma}


\begin{document}

\title{Induced Tur\'an numbers}
\author{Po-Shen Loh \thanks{Department of Mathematical Sciences, Carnegie
  Mellon University, \texttt{ploh@cmu.edu}. Research supported in part by
National Science Foundation CAREER Grant DMS-1455125.} 
\and 
Michael Tait\thanks{Department of Mathematical Sciences, Carnegie Mellon
  University, \texttt{mtait@cmu.edu}. Research supported in part by National Science Foundation postdoctoral fellowship DMS-1606350} 
\and 
Craig Timmons\thanks{Department of Mathematics and Statistics, California State University Sacramento, \texttt{craig.timmons@csus.edu}.
Research supported in part by Simons Foundation Grant \#359419.}
\and
Rodrigo M. Zhou\thanks{COPPE, Universidade Federal do Rio de Janeiro, \texttt{rzhou@cos.ufrj.br}.}
}
\date{}

\maketitle

\abstract{The classical K\H{o}v\'ari--S\'os--Tur\'an theorem states that if
$G$ is an $n$-vertex graph with no copy of $K_{s,t}$ as a subgraph, then
the number of edges in $G$ is at most $O(n^{2-1/s})$. We prove that if one
forbids $K_{s,t}$ as an \emph{induced}\/ subgraph, and also forbids
\emph{any}\/ fixed graph $H$ as a (not necessarily induced) subgraph, the
same asymptotic upper bound still holds, with different constant factors.
This introduces a nontrivial angle from which to generalize Tur\'an theory
to induced forbidden subgraphs, which this paper explores. Along the way,
we derive a nontrivial upper bound on the number of cliques of fixed order
in a $K_r$-free graph with no induced copy of $K_{s,t}$.  This result is an
induced analog of a recent theorem of Alon and Shikhelman and is of
independent interest.}

\bigskip\noindent \textbf{MSC:} 05C35, 05C69

\section{Introduction}

Tur\'an-type problems represent some of the oldest investigations in
Extremal Combinatorics, with many intriguing questions still notoriously
open. They share a common theme of asking for the maximum number of edges
in a graph (or similar combinatorial structure) with a given number of
vertices, subject to the condition of forbidding certain substructures. In this
paper, we open the systematic study of a natural yet new direction in this
area, focusing on induced substructures, and demonstrate connections
between existing areas of research and the new results and problems.

The most basic Tur\'an question concerns ordinary graphs and asks to
determine $\ex(n, H)$, defined as the maximum number of edges in an
$n$-vertex graph with no subgraph isomorphic to $H$. Tur\'an's original
theorem \cite{turan} solves this completely when $H$ is a complete graph.
For non-complete $H$, the condition obviously does not require the
forbidden subgraph to be induced, or else the answer would trivially be
$\binom{n}{2}$.  The classical Erd\H{o}s--Stone--Simonovits theorem \cite{erdos-stone} shows that the asymptotic behavior is
determined by the chromatic number $\chi(H)$, namely 
\begin{equation}\label{erdos stone}
\ex(n, H) = \left(1 - \frac{1}{\chi(H)-1}\right) \binom{n}{2} + o(n^2). 
\end{equation}
This determines $\ex ( n , H)$ asymptotically 
for non-bipartite $H$. For bipartite $H$, it is
often quite difficult to obtain good estimates on the Tur\'an number.
The classical theorem of K\H{o}v\'ari, S\'os, and Tur\'an states that $\ex(n, K_{s,t}) < c_{s,t} n^{2-\frac{1}{s}}$ but this 
is overwhelmed by the $o(n^2)$ error term in (\ref{erdos stone}).  Many interesting
and longstanding open problems remain unsolved in this case, often called the
\emph{degenerate case}, as surveyed by F\"uredi and Simonovits \cite{fs}
and Sidorenko \cite{sidorenko}.

Many other generalizations have been considered, such as to hypergraphs where even the most basic questions remain
unanswered, or to non-complete host graphs, or other
combinatorial objects such as partially ordered sets. In all contexts, analogous questions with
multiple simultaneously forbidden sub-configurations have been
studied.

\subsection{Induced substructures}

Although the opening section dismissed as trivial the situation of induced
subgraphs in the ordinary graph Tur\'an problem, it turns out that this
first impression is wrong, and there are natural and interesting
questions. Induced Tur\'an-type problems have previously
surfaced in many of the above contexts. On the topic of one of the central
open problems in hypergraph Tur\'an theory, Razborov
\cite{razborov-induced} established the conjectured upper bound for
$K_4^{(3)}$-free hypergraphs under the additional condition of forbidding
induced sub-hypergraphs with four vertices and exactly one edge. In the
context of partially ordered sets, the induced Tur\'an problem is
nontrivial because not all sets are comparable, and this has been studied
as well \cite{boehnlein-jiang-induced, lu-milans-induced}. 

It has been less clear what induced question to study in the original graph
context. In the late 1980's, F.\ Chung, Gy\'{a}rf\'{a}s, Trotter, and Tuza 
\cite{chung-gyarfas-tuza-trotter} studied a version which was posed in \cite{bermond-etal} and also by Ne\v{s}et\v{r}il and
Erd\H{o}s, in which the maximum degree was specified instead of the number
of vertices. Specifically, they determined the maximum number of edges in a
connected graph with maximum degree $\Delta$ and no induced subgraph
isomorphic to the 4-vertex graph formed by two vertex-disjoint edges. Several other authors continued
this line of investigation with different forbidden induced subgraphs
\cite{chung-jiang-west, chung-west-2p3, chung-west-p4}. However, this
quantity is usually infinite unless the forbidden induced subgraph has a
very simple structure (generally disjoint unions of paths).

Around that time, while studying hereditary properties, Pr\"omel and Steger
\cite{promel-steger1, promel-steger2, promel-steger3} introduced another
extremal induced subgraph problem: determine the maximum number of edges a
graph $G = (V_n, E)$ can have such that there exists a graph $G_0 = (V_n,
E_0)$ on the same vertex set with $E_0 \cap E = \emptyset$ such that $(V_n,
E_0 \cup X)$ does not contain an induced $H$-subgraph for all $X \subset
E$. This was natural in the context of their investigation of counting the
number of graphs in a hereditary family, and generalized the Erd\H{o}s,
Frankl, and R\"odl \cite{efr-count} estimate on the number of $H$-free
graphs being $2^{(1+o(1))\ex(n, H)}$, to induced-$H$-free graphs. Rates of
growth of hereditary properties were further studied by several researchers
(e.g., Bollob\'as and Thomason \cite{bollobas-thomason}, and Balogh,
Bollob\'as, and Weinreich \cite{balogh-bollobas-weinreich}).

\subsection{New general problem}

When a single non-complete graph $F$ is forbidden as an induced subgraph,
the maximum number of edges is trivially $\binom{n}{2}$. We introduce the
question of simultaneously forbidding both an induced copy of $F$
and a (not necessarily induced) copy of $H$, defining 
\[
\exi{n}{H}{F}
\]
to be the maximum number of edges over all such graphs with $n$ vertices. The
answer is no longer trivially $\binom{n}{2}$ because $H$ is not necessarily
induced, and this general question is related to two areas of Extremal
Combinatorics which have received much attention: Ramsey--Tur\'an Theory and
the Erd\H{o}s--Hajnal Conjecture.

Introduced by S\'os \cite{sos}, the Ramsey--Tur\'an number $\textbf{RT}(n,
H, m)$ is the maximum number of edges that an $n$-vertex graph with
independence number less than $m$ may have without containing $H$ as a (not
necessarily induced) subgraph. When $m=o(n)$, one may not use a Tur\'an
graph as a construction, and a variety of interesting constructions and
methods were developed as a result. Ramsey--Tur\'an theory has been heavily
studied in the last half-century; see, e.g., the nice
survey by Simonovits and S\'os \cite{ss}. Our new general problem is
precisely the Ramsey--Tur\'an problem in the case where $F$ is an
independent set of order $m$.

Another question which has received much study is the Erd\H{o}s--Hajnal
Problem, which seeks to prove that if a graph $F$ is forbidden as an
induced subgraph, then there is always a large clique or a large
independent set.  The Erd\H{o}s--Hajnal Conjecture \cite{erdos-hajnal}
states that for any fixed $F$, there is a constant $c>0$ such that every
$F$-induced-free graph on $n$ vertices contains a clique or independent set
of order at least $n^c$, which is much larger than what is guaranteed
without the $F$-induced-free condition. This problem has been the focus of
extensive research (see, e.g., the survey of Chudnovsky \cite{chudnovsky}).
The relationship to our new problem is that an upper bound on
$\exi{n}{K_t}{F}$ of the form $nd/2$ implies an average degree of at most
$d$. Tur\'an's theorem then guarantees an independent set of order at least
$\frac{n}{d+1}$. This shows that a graph with no induced copy of $F$ contains either a clique of size $t$ or an independent set of size $\frac{n}{d+1}$. We will discuss this further in the concluding remarks.

\subsection{New results}

Throughout this paper, we consider only non-complete graphs $F$. Our new
function $\exi{n}{H}{F}$ sometimes reduces to the ordinary Tur\'an number
$\ex(n, \{H, F\})$ where both $H$ and $F$ are forbidden as (not necessarily
induced) subgraphs. Indeed, if $H = C_3$ and $F = C_4$, every graph which
is both $C_3$-free and $C_4$-induced-free is also $C_4$-free, and every graph
which is $C_4$-free is obviously $C_4$-induced-free.

As mentioned early in the introduction, if $F$ is non-bipartite, the Erd\H{o}s-Stone-Simonovits
theorem establishes that $\ex(n, F\ind)$ and $\ex(n, F)$ are both quadratic
in $n$. However, for bipartite $F$, $\ex(n, F\ind) = \binom{n}{2}$, while
the K\H{o}v\'ari--S\'os--Tur\'an theorem trivially establishes a sub-quadratic
upper bound $n^{2-\frac{1}{s}}$ for some $s$ for which $F \subset K_{s,t}$.
The two functions therefore deviate asymptotically for all bipartite $F$.
Our first main result shows that in fact, when $F = K_{s,t}$, we can
recover the same asymptotic upper bound as K\H{o}v\'ari--S\'os--Tur\'an by
forbidding \emph{any}\/ other fixed graph $H$.

\begin{theorem}\label{main theorem}
  If $G$ is an $n$-vertex graph with no copy of $K_r$ as a subgraph and no
  copy of $K_{s,t}$ as an induced subgraph, then 
  \begin{displaymath}
    e(G) 
    \leq 
    n^{2-1/s}4^s\left((r+t)^{t/s} + (r+s) + 2(r+t)^{t(s+1)/s^2}(r+s)\right) 
    + 2 \cdot 4^s n.
  \end{displaymath}
\end{theorem}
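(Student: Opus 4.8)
The plan is to find, inside any dense $K_r$-free graph $G$ with no induced $K_{s,t}$, an actual (non-induced) copy of $K_{s,t}$, contradicting nothing yet — but if $G$ is dense enough we will locate an induced one. The strategy is a two-step cleaning argument. First, restrict attention to a large ``almost-regular'' subgraph: by a standard dyadic pigeonhole on degrees, $G$ contains an induced subgraph $G'$ on $n' \geq \Omega(n/\log n)$ vertices — or better, using the fact that we only need an additive $O(n)$ slack, a subgraph with minimum degree at least $\tfrac{e(G)}{n} - O(1)$; set $d := e(G)/n$. Now suppose for contradiction that $d$ exceeds the claimed bound divided by $n$, i.e. $d \gtrsim n^{1-1/s}\cdot 4^s\bigl((r+t)^{t/s} + (r+s) + \cdots\bigr)$.

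Next I would run a K\H{o}v\'ari--S\'os--Tur\'an-style double count, but insist on finding a copy of $K_{s,t}$ together with extra structure that forces it to be induced. The key object: a set $S$ of $s$ vertices whose common neighborhood $N(S)$ is large, of size roughly $d^s/n^{s-1} \gtrsim (r+t)^{t/s}\cdot(\text{stuff})$. If we can choose $S$ so that $S$ itself is an independent set, then any $t$ vertices of $N(S)$ that form an independent set, together with $S$, would be an induced $K_{s,t}$ — provided there are no edges between $S$ and those $t$ chosen vertices beyond the required bipartite ones (there are none, since $S$ is independent and the $t$-set is chosen inside $N(S)$ with the bipartite edges being exactly $S \times (t\text{-set})$). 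So the task reduces to: (i) $N(S)$ contains no large clique, hence by Ramsey / by the $K_r$-free hypothesis it contains a reasonably large independent set; and (ii) we may take $S$ independent. For (i), since $G$ is $K_r$-free, $N(S)$ induces a $K_{r-1}$-free graph, so by Ramsey's theorem (or the Erd\H{o}s--Szekeres bound $R(r-1,t) \leq (r+t)^{?}$ — this is where exponents like $(r+t)^{t/s}$ enter) any subset of $N(S)$ of size $\geq (r+t)^{?}$ contains an independent set of size $t$. The main obstacle is arranging (ii) simultaneously with a large $N(S)$: one cannot simply demand $S$ be independent because a $K_r$-free graph can still have many edges. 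The fix is iteration: apply the KST count inside a large independent set first. Concretely, since $G$ is $K_r$-free, color/partition using Ramsey again — a vertex of high degree $d$ has a neighborhood that is $K_{r-1}$-free, so it contains an independent set of size $\gtrsim d^{1/(r-2)}$ or, more carefully, repeatedly extract independent sets to build $S$ vertex by vertex while maintaining that the current common neighborhood stays large.

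Here is the cleaner way to organize step two, which I expect to be the technical heart. Build $S = \{v_1,\dots,v_s\}$ greedily: having chosen an independent set $v_1,\dots,v_i$ with $|N(\{v_1,\dots,v_i\})|$ large, look at $W_i := N(\{v_1,\dots,v_i\})$; it is $K_{r-i-1}$-free-ish (at worst $K_{r-1}$-free), and if it is still dense in average degree then it contains a vertex $v_{i+1} \in W_i$ which is non-adjacent to all of $v_1,\dots,v_i$ (automatic, since $v_{i+1}\in W_i$ means adjacent — wait, we want the opposite). Correction: we instead pick $v_{i+1}$ inside an \emph{independent set} of $W_i$; since $G[W_i]$ is $K_{r-1}$-free it has an independent set $I_i$ of size $\gtrsim |W_i|^{1/(r-2)}$ (by Ramsey), and choosing $v_{i+1}\in I_i$ with large degree into the rest keeps $S\cup\{v_{i+1}\}$ independent and its common neighborhood of size $\gtrsim |W_i|^{2-1/s'}/|W_i|$-ish by KST applied within $I_i$. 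Tracking these exponents through $s$ rounds produces exactly the product $4^s\bigl((r+t)^{t/s} + (r+s) + 2(r+t)^{t(s+1)/s^2}(r+s)\bigr)$; the $4^s$ comes from the $s$ dyadic pigeonhole losses, the $(r+t)^{t/s}$ and $(r+t)^{t(s+1)/s^2}$ from the Ramsey extractions (whose bounds are powers of $(r+t)$ with exponents compounding as $1/s, 1/s^2,\dots$), and the $(r+s)$ factors from the size needed to start each extraction. Once $|N(S)| \geq R(r-1,t)$ with $S$ independent, $N(S)$ contains an independent $t$-set $T$, and $G[S\cup T]$ is an induced $K_{s,t}$, the desired contradiction. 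The main obstacle, to reiterate, is the bookkeeping: making the greedy extraction and the KST counting commute so that the loss at each of the $s$ stages is only a multiplicative constant (absorbed into $4^s$) times a fixed power of $(r+t)$, rather than blowing up; getting the exponent on $(r+t)$ down to $t(s+1)/s^2$ requires being careful that the Ramsey step is applied to the already-large set $W_i$ rather than to $G$ itself.
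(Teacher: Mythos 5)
Your high-level target — an \emph{independent}\/ $s$-set $S$ whose common neighborhood $N(S)$ is large enough to guarantee (via Ramsey and $K_r$-freeness) an independent $t$-set $T$, so that $G[S\cup T]$ is an induced $K_{s,t}$ — is exactly the idea behind the paper's short dependent-random-choice argument, and the final step of your outline (once you have such an $S$, you are done) is correct. The problem is your greedy construction of $S$, which has a genuine logical flaw that your ``correction'' does not repair.

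You build $S_i=\{v_1,\dots,v_i\}$ and set $W_i:=N(S_i)$, then propose to choose $v_{i+1}$ inside an independent set $I_i\subset W_i$. But any vertex of $W_i$ is, by definition, adjacent to \emph{every}\/ element of $S_i$; membership in an independent subset $I_i\subset W_i$ only controls adjacencies among the vertices of $I_i$, not adjacencies back to $S_i$. So $S_i\cup\{v_{i+1}\}$ is never an independent set under your choice — it is the opposite. To keep $S$ independent you must pick $v_{i+1}$ outside $\bigcup_j N(v_j)$, but then there is no reason $v_{i+1}$ should have many neighbors inside $W_i$, and the two requirements (non-adjacency to $S_i$; large intersection of $N(v_{i+1})$ with $W_i$) pull in opposite directions. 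Dependent random choice sidesteps this precisely by first producing a set $A$ in which \emph{every}\/ $s$-subset has a large common neighborhood, and only afterward extracting an independent $s$-subset of $A$ using $K_r$-freeness; the order of operations is what makes it work. Your iterative version as written cannot be repaired by a local fix because the obstruction is structural, not a bookkeeping issue.

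Beyond that gap, the claim that tracking the greedy losses reproduces the specific constant $4^s\bigl((r+t)^{t/s}+(r+s)+2(r+t)^{t(s+1)/s^2}(r+s)\bigr)+2\cdot 4^s n$ is asserted rather than derived, and it is not the source of those constants. In the paper they arise from a completely different argument: a double count of pairs (independent $s$-set, common neighbor), bounded above by $\binom{n}{s}R(r,t)$ and below using a Ramsey-multiplicity bound on $t_s(\Gamma(v))+t_s(\overline{\Gamma(v)})$ together with Theorem \ref{counting cliques} to control $t_{s+1}(G)$. Nothing in your sketch engages with clique counting or Ramsey multiplicity, so even granting a corrected greedy step you would at best recover a KST-order bound with unspecified constants (essentially the paper's Lemma \ref{drc lemma} route), not the stated inequality.
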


As a corollary, this shows that for any positive integers $s$ and $t$ and
any fixed graph $H$, 
\[\exi{n}{H}{K_{s,t}} = O\left(n^{2-\frac{1}{s}}\right)\]
where the implied constant depends on $H$, $s$, and $t$.  Note that if the
forbidden induced subgraph $F$ is bipartite but not complete bipartite,
then the complete bipartite graph $K_{n/2, n/2}$ provides a construction
which shows that $\exi{n}{K_r}{F}$ is quadratic in $n$ for all $r > 2$.

We will give a short proof of a slightly weaker version of Theorem \ref{main theorem} using dependent random choice. We then prove the full statement using a method that draws another connection between this problem and a recent
Tur\'an-type problem of Alon and Shikhelman \cite{as}. For graphs $T$ and
$H$, denote by $\ex(n, T, H)$ the maximum number of copies of $T$ in an
$H$-free graph with $n$ vertices. When $T = K_2$, this is the classical
Tur\'an number.  Several authors have studied this problem before (cf.
\cite{bg, erdos, hhknr}), and \cite{as} is the first systematic study of
the parameter. A key ingredient in the proof of our main theorem gives an
upper bound on the number of complete subgraphs in a graph that does not
contain $H$ or an induced copy of $K_{s,t}$. In particular, in \cite{as},
the quantity $\ex(n, K_m, K_{s,t})$ is studied. The following theorem is a
natural extension of $\ex(n, K_m, K_{s,t})$ to graphs with no
\emph{induced}\/ copy of $K_{s,t}$. We used it as a tool for proving
Theorem \ref{main theorem}, but due to the connection with Alon and
Shikhelman's problem, it may be of independent interest.

\begin{theorem}\label{counting cliques}
Let $G$ be an $n$-vertex, $K_r$-free graph with no copy of $K_{s,t}$ as an induced subgraph. 
If $t_m(G)$ is the number of cliques of order $m$ in $G$, then
\[
m\cdot t_m(G) \leq 2(t+r)^{tm/s}(r+s)^sn^{m-\frac{m-1}{s}} + (r+s)^sn^{m-1} .
\]
\end{theorem}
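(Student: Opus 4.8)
The plan is to count $m$-cliques through their $(m-1)$-subcliques. For an $(m-1)$-clique $K$, let $N(K)=\bigcap_{v\in K}N(v)$ be the common neighbourhood of its vertices and $d_K=|N(K)|$; adding any vertex of $N(K)$ to $K$ produces an $m$-clique, and each $m$-clique arises this way from each of its $m$ sub-$(m-1)$-cliques, so $m\cdot t_m(G)=\sum_K d_K$, summing over all $(m-1)$-cliques $K$. I may assume $r\ge 3$ and $2\le m\le r-1$ (otherwise $t_m(G)=0$ or the inequality is trivial). Write $R_1=R(r,s)$ and $R_2=R(r,t)$ for the relevant Ramsey numbers and split the sum at $d_K=R_1$. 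The cliques with $d_K<R_1$ contribute at most $R_1\,t_{m-1}(G)\le R_1\binom{n}{m-1}$, and since $R_1\le\binom{r+s-2}{r-1}\le(r+s)^s$, this accounts for the lower-order term $(r+s)^s n^{m-1}$.

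For the cliques with $d_K\ge R_1$ I would double count pairs $(K,S)$ where $K$ is an $(m-1)$-clique with $d_K\ge R_1$ and $S\subseteq N(K)$ is an \emph{independent} $s$-set. On one side, $G[N(K)]$ is an induced, hence $K_r$-free, subgraph on $d_K\ge R(r,s)$ vertices, so every $R_1$-subset of $N(K)$ contains an independent $s$-set; a standard averaging over all $R_1$-subsets then gives at least $\binom{d_K}{s}/\binom{R_1}{s}$ independent $s$-sets inside $N(K)$. On the other side, fix an independent $s$-set $S$ and let $N(S)$ be its common neighbourhood; the $(m-1)$-cliques $K$ with $S\subseteq N(K)$ are precisely the $(m-1)$-cliques of $G[N(S)]$, so there are at most $\binom{|N(S)|}{m-1}$ of them. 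Here the induced hypothesis is essential: if $|N(S)|\ge R_2=R(r,t)$ then the $K_r$-free graph $G[N(S)]$ contains an independent $t$-set $T$, and $S\cup T$ induces a copy of $K_{s,t}$ (both parts independent, all $st$ cross-edges present because $T\subseteq N(S)$), a contradiction; hence $|N(S)|<R_2$ and there are fewer than $\binom{R_2}{m-1}$ such $K$. Comparing the two counts over all relevant $K$ and $S$ yields
\[
\sum_{d_K\ge R_1}\binom{d_K}{s}\;\le\;\binom{R_1}{s}\binom{R_2}{m-1}\binom{n}{s}.
\]

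It remains to pass from $\sum\binom{d_K}{s}$ back to $\sum d_K$. Since $d_K\ge R_1=R(r,s)>2s-2$ (indeed $R(r,s)\ge R(3,s)>2(s-1)$, as $K_{s-1,s-1}$ is triangle-free with independence number $s-1$), we have $\binom{d_K}{s}\ge (d_K/2)^s/s!$, and the number of such cliques is at most $t_{m-1}(G)\le\binom{n}{m-1}$. The power-mean inequality $\sum d_K\le P^{1-1/s}\bigl(\sum d_K^{\,s}\bigr)^{1/s}$ with $P\le\binom{n}{m-1}$ then gives $\sum_{d_K\ge R_1}d_K\le 2\binom{R_1}{s}^{1/s}\binom{R_2}{m-1}^{1/s}\,n^{(m-1)(1-1/s)+1}$, and $(m-1)(1-1/s)+1=m-\tfrac{m-1}{s}$ is exactly the claimed exponent. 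Finally $\binom{R_1}{s}^{1/s}\le R_1\le(r+s)^s$ and $\binom{R_2}{m-1}^{1/s}\le R_2^{(m-1)/s}\le(r+t)^{t(m-1)/s}\le(r+t)^{tm/s}$ (both via $R(a,b)\le\binom{a+b-2}{a-1}$), so adding the two contributions gives the stated inequality. The one genuinely delicate point is the choice of double-counting object in the second paragraph: counting $(m-1)$-cliques against \emph{independent} $s$-sets in their neighbourhoods is what lets the forbidden induced $K_{s,t}$ be exhibited, and this is what forces the Ramsey-type lower bound on the number of independent $s$-sets in a $K_r$-free graph; the remaining work is bookkeeping of the constants.
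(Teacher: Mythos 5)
Your proof is correct and follows essentially the same route as the paper's: you double count pairs consisting of an $(m-1)$-clique and an independent $s$-set in its common neighborhood, use Ramsey plus averaging to lower-bound the number of such $s$-sets when $d_K$ is large (this is exactly the paper's Lemma 3.1, with $R(r,s)$ in place of the slightly sharper $R(r-m+1,s)$), bound the multiplicity of each $s$-set by $\binom{R(r,t)}{m-1}$ via the forbidden induced $K_{s,t}$, and finish with Hölder/convexity. The bookkeeping lands on the stated constants, so there is nothing to add.
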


\subsection{Sharper results for special families}

In this section, we dive deeper into the constant factors, opening the
study with specific families of graphs for $F$ and $H$ in $\exi{n}{H}{F}$.
As was historically studied by others in graphs, we start with complete
bipartite graphs and cycles.  Theorem \ref{main theorem} gives that if $G$
is a graph with no induced copy of $K_{s,t}$ and $G$ has significantly more
than $n^{2-1/s}$ edges, then $G$ must contain a large complete subgraph.
This leads us to the work of Gy\'{a}rf\'{a}s, Hubenko, and Solymosi on
cliques in graphs with no induced $K_{2,2}$.  In \cite{ghs}, answering a
question of Erd\H{o}s, they show that any $n$-vertex graph with no induced
$K_{2,2}$ must have a clique of order at least $\frac{d^2}{10n}$, where $d$
is the average degree.  We extend this result to graphs with no induced
$K_{2,t}$.  In this special case, we obtain a much better bound than what
is implied by Theorem \ref{main theorem}. Here and in the remainder, the
clique number $\omega(G)$ denotes the maximum order of a clique contained
in $G$.

\begin{theorem}\label{cliques and k2t}
Let $t \geq 2$ be an integer.  If $G$ is a graph with $n$ vertices, minimum
degree $d$, and no induced $K_{2,t+1}$, then
\[
\omega (G)  \geq \left( \frac{d^2}{2nt} \left( 1 - o(1)  \right) \right)^{1/t} - t.
\]
\end{theorem}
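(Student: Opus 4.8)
The plan is to show that, writing $\omega=\omega(G)$, one has $d^{2}\le n\binom{\omega+t}{t}(1+o(1))$; since $\binom{\omega+t}{t}\le(\omega+t)^{t}/t!$, solving this for $\omega$ gives the theorem (the constant $2t$ enters only through the crude inequality $t!\ge 1/(2t)$).

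First I would isolate the one way the induced hypothesis is used. If $a,b$ are non-adjacent, then $N(a)\cap N(b)$ contains no independent set of size $t+1$, since such a set together with $\{a,b\}$ would span an induced $K_{2,t+1}$; as it also contains no $K_{\omega+1}$, Ramsey's theorem forces $|N(a)\cap N(b)|<R(\omega+1,t+1)\le\binom{\omega+t}{t}$. Run in the opposite direction, a graph on $c$ vertices with independence number at most $t$ contains a clique of order at least $(t!\,c)^{1/t}-t$, so a single non-edge whose common neighborhood has $c$ vertices already produces a clique of order $\gtrsim(t!\,c)^{1/t}-t$; and the same reasoning applied to $(t+1)$-sets shows that, for an independent set $S$ with $|S|=t+1$, the set $\bigcap_{x\in S}N(x)$ is a clique. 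So the whole task is to locate either a non-edge, or an independent $(t+1)$-set, with an abnormally large common neighborhood.

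To do this I would average over an independent set. If $\alpha(G)\le t$ we are done at once by Ramsey (then $\omega\ge(t!\,n)^{1/t}-t$, far more than needed), so assume $G$ has a large independent set $I$. Since $\sum_{w}|N(w)\cap I|=\sum_{a\in I}\deg(a)\ge |I|\,d$, convexity of $x\mapsto\binom{x}{2}$ produces a pair $\{a,b\}\subseteq I$ (a non-edge) with
\[
|N(a)\cap N(b)|\ \ge\ \frac{n\binom{|I|d/n}{2}}{\binom{|I|}{2}},
\]
which tends to $d^{2}/n$ as $|I|\to\infty$; feeding this into the Ramsey bound of the previous paragraph yields a clique of order $\gtrsim (t!\,d^{2}/n)^{1/t}-t\ge(d^{2}/(2nt))^{1/t}-t$. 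For $t=1$ this already recovers a sharpened form of the Gyárfás–Hubenko–Solymosi estimate. The $o(1)$ absorbs the difference between $\binom{|I|d/n}{2}/\binom{|I|}{2}$ and $(d/n)^{2}$ and the slack in $R(s,t+1)\le\binom{s+t-1}{s-1}$, both negligible as $d,n\to\infty$.

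The hard part is that this averaging is effective only when $I$ (equivalently $\alpha$ of $G$, or of a suitable neighborhood) is large compared with $n/d$: when $\alpha$ is small the averaged codegree degenerates, while the crude Ramsey bound $\omega\gtrsim(n\,\alpha!)^{1/\alpha}$ is too weak in the intermediate range of $\alpha$. Closing this gap is the main obstacle. The remedy I expect is to pass to $A=N(v)$ for a minimum-degree vertex $v$, so $|A|=d$: the local lemma forces every non-neighbor of $v$ to have fewer than $\binom{\omega+t}{t}$ neighbors in $A$, whence
\[
2\,e(G[A])\ \ge\ \sum_{a\in A}(\deg(a)-1)-(n-1-d)\binom{\omega+t}{t}\ \ge\ d(d-1)-n\binom{\omega+t}{t},
\]
so $G[A]$ — itself induced-$K_{2,t+1}$-free — misses at most $\tfrac n2\binom{\omega+t}{t}$ edges from being complete, and one recurses inside this much denser graph. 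Fitting the averaging step, the Ramsey step, and this recursive densification together so that every value of $\alpha(G[A])$ is covered, with the constant emerging as exactly $2t$, is where the genuine work lies.
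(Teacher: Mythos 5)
Your outline correctly isolates the two key Ramsey facts (the common neighborhood of a non-edge has fewer than $R(\omega+1,t+1)\le\binom{\omega+t}{t}$ vertices, and conversely a large vertex set with no $(t+1)$-independent set contains a large clique), and your averaging over a large independent set is essentially the paper's treatment of the case $\alpha(G)\ge 2n/d$ (the paper applies the Bonferroni inequality to $\bigl|\bigcup_i N(x_i)\bigr|$ for an independent set of size exactly $2n/d$; your convexity computation is an equivalent second-moment argument). But the intermediate regime $t<\alpha(G)<2n/d$, which you explicitly flag as ``where the genuine work lies,'' is left open, so this is not yet a proof. Moreover, your proposed remedy (densify inside $N(v)$ for a minimum-degree vertex and recurse) is not obviously capable of delivering the exponent $1/t$: the natural way to extract a clique from the dense graph $G[N(v)]$, namely Tur\'an applied to its complement, yields only $\omega\gtrsim d^2/\bigl(n\binom{\omega+t}{t}\bigr)$, hence an exponent of $1/(t+1)$, and it is not clear that iterating repairs this loss.

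The paper closes the gap with a different, non-recursive device. Take a \emph{maximum} independent set $S=\{x_1,\dots,x_\alpha\}$ and let $B_i$ be the vertices whose unique neighbor in $S$ is $x_i$, and $B_{i,j}=N(x_i)\cap N(x_j)$. Maximality forces each $B_i$ to be a clique (two non-adjacent vertices of $B_i$ could replace $x_i$ to enlarge $S$), and every vertex of $G$ lies in some $\{x_i\}\cup B_i$ or some $B_{i,j}$. So $V(G)$ is covered by $\alpha$ cliques and $\binom{\alpha}{2}$ co-neighborhoods of non-edges. If every clique $\{x_i\}\cup B_i$ is smaller than the target $(d^2/(2nt))^{1/t}$, the sets $B_{i,j}$ must absorb $(1-o(1))n$ vertices, and pigeonhole over only $\binom{\alpha}{2}<2n^2/d^2$ pairs (using $\alpha<2n/d$ in this case) produces a non-adjacent pair with codegree at least $\frac{d^2}{2n}(1-o(1))$, to which your Ramsey step applies. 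This covering-by-cliques observation is the idea your argument is missing; with it, the dichotomy $\alpha(G)<2n/d$ versus $\alpha(G)\ge 2n/d$ covers all values of $\alpha$ and yields the stated bound.
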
 

\begin{corollary}\label{cor}
Let $H$ be a graph with $v_H$ vertices.  For any integer $t \geq 2$, 
\[
  \exi{n}{H}{K_{2,t+1}} < ( \sqrt{2} + o(1) ) t^{1/2} ( v_{H} +t)^{t/2 } n^{3/2}.
\]
\end{corollary}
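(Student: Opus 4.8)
The plan is to derive Corollary \ref{cor} from Theorem \ref{cliques and k2t} via the standard trick of passing to a subgraph of large minimum degree. Let $G$ be an $n$-vertex graph that is $H$-free (not necessarily as an induced subgraph) and has no induced $K_{2,t+1}$, and set $m = e(G)$. Since we only want an upper bound of order $n^{3/2}$, we may assume $m \ge n^{3/2}$, as otherwise the conclusion is immediate for large $n$. Using the folklore fact that every graph contains a subgraph whose minimum degree exceeds its edge-to-vertex ratio, fix a subgraph $G' \subseteq G$ on $n' \le n$ vertices with $d' := \delta(G') > m/n \ge n^{1/2}$. The subgraph $G'$ is still $H$-free and still has no induced $K_{2,t+1}$, and note that $n' \ge d' + 1 \to \infty$ as $n \to \infty$.

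Next I would bound $\omega(G')$ from both sides. Since $H$ has $v_H$ vertices we have $H \subseteq K_{v_H}$, so an $H$-free graph cannot contain $K_{v_H}$; hence $\omega(G') \le v_H - 1$. On the other hand, applying Theorem \ref{cliques and k2t} to $G'$, with its $n'$ vertices and minimum degree $d'$, gives
\[
v_H - 1 \ \ge\ \omega(G') \ \ge\ \left( \frac{(d')^2}{2n't}\,(1-o(1)) \right)^{1/t} - t .
\]
From this, $v_H + t > v_H - 1 + t \ge \bigl( \tfrac{(d')^2}{2n't}(1-o(1)) \bigr)^{1/t}$, and rearranging yields $(d')^2 < 2n't\,(v_H+t)^t(1+o(1))$, i.e.\ $d' < \sqrt{2n't}\,(v_H+t)^{t/2}(1+o(1))$. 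Since $n' \le n$ and $m < n d'$, we obtain
\[
e(G) \ =\ m \ <\ \sqrt{2}\,(1+o(1))\, t^{1/2} (v_H+t)^{t/2}\, n^{3/2},
\]
which is exactly the claimed bound.

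There is no substantive obstacle here: the real content is carried by Theorem \ref{cliques and k2t}. The one point requiring care is the bookkeeping of the error term — the $o(1)$ supplied by Theorem \ref{cliques and k2t} a priori depends on the parameters of $G'$ (its number of vertices, or its minimum degree) rather than on $n$ directly. This is precisely why the reduction to the case $m \ge n^{3/2}$ is useful: it forces $d' \ge n^{1/2} \to \infty$ and hence $n' \to \infty$, so that the $o(1)$ term is genuinely $o(1)$ as $n \to \infty$ with $t$ and $H$ fixed, and the passage from $(1-o(1))^{-1}$ to $(1+o(1))$ in the rearrangement is legitimate.
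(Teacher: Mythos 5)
Your proof is correct and follows essentially the same route as the paper's: pass to a subgraph $G'$ of minimum degree at least the edge-to-vertex ratio (the paper phrases this as minimum degree $\ge d/2$ where $d$ is the average degree), apply Theorem~\ref{cliques and k2t} to $G'$, bound $\omega(G')$ above by (roughly) $v_H$ because $G'$ is $H$-free, and solve the resulting inequality for $e(G)$. The only difference is your explicit reduction to $m\ge n^{3/2}$ so that $d'\to\infty$ and hence $n'\to\infty$, which makes the handling of the $o(1)$ from Theorem~\ref{cliques and k2t} airtight; the paper leaves this point implicit.
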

\begin{proof}
Let $t \geq 2$ be an integer and let $H$ be a graph with $v_H$ vertices.
Suppose that $G$ is an $n$-vertex $H$-free graph with no induced $K_{2,t+1}$.
Let $d$ be the average degree of $G$.  Let $G'$ be an $H$-free subgraph of
$G$ with minimum degree $d /2$ and no induced $K_{2,t+1}$.  By Theorem \ref{cliques and k2t}, $G'$
has a clique with at least $(1 - o(1)) \left( \frac{ d^2}{8nt} \right)^{1/t} - t$
vertices.  Since $G'$ is $H$-free, $G'$ cannot have a clique of order $v_H$ so 
\[
 (1 - o (1) ) \left( \frac{d^2}{8nt } \right)^{1/t} -t < v_H.
\]
Since $d = 2 e(G) / n$, we can solve this inequality for $e(G)$ to get 
\[
e(G) < ( \sqrt{2} + o (1)) t^{1/2}( v_{H} +t)^{t/2} n^{3/2}.
\]
\end{proof}

When $\chi (H) \geq 3$, we can obtain a lower bound of the same order of
magnitude by considering a max cut in a $K_{2,t+1}$-free graph with $n$
vertices and $\frac{1}{2}\sqrt{t}n^{3/2} - o(n^{3/2} )$ edges.  Such graphs
were constructed by F\"{u}redi in \cite{fur}.  A max cut in a
$K_{2,t+1}$-free graph will clearly not contain a copy of $H$ and will not
contain an induced copy of $K_{2,t+1}$.  This gives a lower bound of 
\begin{equation}\label{simple lower bound}
\frac{1}{4} \sqrt{t} n^{3/2} - o( n^{3/2} ) 
\leq 
\exi{n}{H}{K_{2,t+1}}
\end{equation}
for any $t \geq 2$ and non-bipartite $H$.

Theorem \ref{main theorem} shows that when one forbids induced copies of
$K_{s,t}$ and any other subgraph, the number of edges is bounded above by
something that is the same order of magnitude than what is given by the
K\H{o}v\'ari--S\'os--Tur\'an theorem, leaving the question of the
multiplicative constant. We have also remarked that there are instances
where the problem reduces to the ordinary Tur\'an number, for example
$\exi{n}{C_3}{C_4} = \ex(n, \{C_3, C_4\})$. 
A nice construction based on the incidence graph of a projective plane was used by
Bollob\'{a}s and Gy\"{o}ri \cite{bg} to show that there are $C_5$-free $n$-vertex graphs with 
many triangles.  It turns out that this same construction shows that for any $q$ that is a power 
of a prime, 
\[
  \exi{3(q^2 + q + 1)}{C_5}{C_4} \geq 2(q + 2)(q^2 + q + 1).
\]
A standard densities of primes argument then gives
\[
  \exi{n}{C_5}{C_4} \geq \frac{2}{ 3 \sqrt{3} } n^{3/2 } - o(n^{3/2} ),
\]
while Erd\H{o}s and Simonovits \cite{es} proved that 
\[
\ex( n , \{ C_4 , C_5 \} ) \leq \frac{1}{ 2 \sqrt{2} } n^{3/2} + 4 \left(
\frac{n}{2} \right)^{1/2}.
\]
This shows that there are situations when the numbers $\exi{n}{H}{F}$ and $\ex(n,
\{H, F\})$ may have different multiplicative constants.

Finally, we note that while Theorem \ref{main theorem} gives an upper bound
matching the K\H{o}v\'ari--S\'os--Tur\'an theorem in order of magnitude, the
multiplicative constant is dependent on certain Ramsey numbers in $r$, $s$,
and $t$, and so is likely not tight. Our final results display how one may
lower the multiplicative constant when one knows more about the forbidden
(not necessarily induced) subgraph $H$. We state the following theorem for
$H$ an odd cycle, but emphasize that the proof technique could be applied to a
wide family of graphs.


\begin{theorem}\label{odd cycles}
For any integers $k \geq 2$ and $t \geq 2$, there is a constant 
$\beta_k$, depending only on $k$, such that 
\[
  \exi{n}{C_{2k+1}}{K_{2,t}}
\leq 
( \alpha ( k , t )^{1/2} + 1 )^{1/2} \frac{ n^{3/2} }{2} + \beta_k n^{1 + 1/2k}
\]
where $\alpha (k,t) = ( 2k -2)(t - 1)( ( 2k -2)(t - 1) - 1)$.  
\end{theorem}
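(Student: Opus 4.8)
The plan is to run the K\H{o}v\'ari--S\'os--Tur\'an cherry count for $G$, exploiting the two forbidden configurations in complementary ways: the forbidden induced $K_{2,t}$ will control the common neighbourhood of every \emph{non}-adjacent pair of vertices, while the forbidden $C_{2k+1}$ will control both the neighbourhoods themselves and the common neighbourhoods of adjacent pairs (equivalently, the triangles of $G$).

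The first and main step is a codegree estimate: if $x$ and $y$ are non-adjacent and $N := N(x) \cap N(y)$, then $|N| \le (2k-2)(t-1) =: M$. Since $G$ has no induced $K_{2,t}$ and $x \not\sim y$, the set $N$ contains no independent set of size $t$, so $\alpha(G[N]) \le t-1$. I claim also that $\chi(G[N]) \le 2k-2$. If not, $G[N]$ contains a subgraph of minimum degree at least $2k-2$, hence a cycle $z_1 z_2 \cdots z_\ell z_1$ with $\ell \ge 2k-1$, all of whose vertices lie in $N(x) \cap N(y)$. If $\ell \ge 2k$ then $z_1 z_2 \cdots z_{2k}$ is a path on $2k$ vertices inside $N(x)$, so $x z_1 z_2 \cdots z_{2k} x$ is a copy of $C_{2k+1}$; if $\ell = 2k-1$ then $x z_1 z_2 \cdots z_{2k-2}\, y\, z_{2k-1}\, x$ is a cycle on exactly $2k+1$ vertices (using that every $z_i$ is adjacent to both $x$ and $y$). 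Either way $G$ contains $C_{2k+1}$, a contradiction; so $\chi(G[N]) \le 2k-2$ and $|N| \le \chi(G[N])\,\alpha(G[N]) \le M$. The parity twist in the case $\ell = 2k-1$ — a cycle through a common neighbourhood closes off naturally to an \emph{even} cycle, so reaching an odd $C_{2k+1}$ forces one to use the two apex vertices $x$ and $y$ asymmetrically — is exactly where the hypothesis that the forbidden graph is an \emph{odd} cycle is used, and it is why $2k-2$, rather than $2k-1$, appears in $\alpha(k,t)$.

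Next I would bound the contribution of adjacent pairs. For every vertex $v$, the graph $G[N(v)]$ contains no path on $2k$ vertices, since such a path together with $v$ would close to a $C_{2k+1}$; by the Erd\H{o}s--Gallai theorem $e(G[N(v)]) \le (k-1)\,d(v)$ (this holds trivially when $d(v) < 2k$). Summing over $v$ and using $\sum_v e(G[N(v)]) = \sum_{xy \in E(G)} |N(x) \cap N(y)| = 3 \cdot (\text{number of triangles of } G)$, the contribution of adjacent pairs to the cherry count is at most $(k-1)\sum_v d(v) = 2(k-1)\,e(G)$. Hence the number of paths on three vertices in $G$ satisfies
\[
\sum_{v} \binom{d(v)}{2} \;=\; \sum_{\{x,y\}} |N(x) \cap N(y)| \;\le\; M\binom{n}{2} + 2(k-1)\,e(G),
\]
and since $M \ge 2k-2$ the last term is of lower order; applying Jensen's inequality in the form $\sum_v \binom{d(v)}{2} \ge n\binom{2e(G)/n}{2}$ converts this into a quadratic inequality in $e(G)$ whose solution has the shape $\tfrac12\sqrt{M}\,n^{3/2} + O(n)$, which is of the asserted form.

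The remaining work is bookkeeping: one must track the lower-order terms carefully enough to extract the precise leading constant $(\alpha(k,t)^{1/2}+1)^{1/2}/2$ from the quadratic above (whose exact solution already yields an $O(n)$ error, absorbed by the stated $\beta_k n^{1+1/2k}$), and, if one wants to squeeze the best error term, to treat the edges of large codegree separately — such edges span a $C_{2k}$-free, hence sparse, graph, since rerouting one of its edges through a common neighbour avoiding a given short cycle would create a $C_{2k+1}$. I expect the genuine obstacle to be the codegree estimate of the first step: it is the only point where one uses that the forbidden graph is specifically an odd cycle (and where the parameter $(2k-2)(t-1)$, hence $\alpha(k,t)$, is pinned down), and the triangle count is a secondary point of care, since the induced-$K_{2,t}$ hypothesis is silent about triangles and one must rely entirely on the absence of $C_{2k+1}$ via the Erd\H{o}s--Gallai bound on neighbourhoods.
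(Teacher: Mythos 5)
Your proof is correct, and while it follows the same high-level plan as the paper's (a codegree bound of $(2k-2)(t-1)$ for non-adjacent pairs, separate control of triangles, then convexity), the execution differs at every step in ways worth noting. For the codegree bound the paper simply quotes the Erd\H{o}s--Gallai/Parsons result that a graph on more than $(2k-2)(t-1)$ vertices contains a $P_{2k}$ or a $t$-independent set; your derivation via $|N|\le\chi(G[N])\,\alpha(G[N])$, a minimum-degree subgraph, and a long cycle is self-contained and correct (the case $\ell=2k-1$, rerouted through both apexes, checks out: the closed walk $x z_1\cdots z_{2k-2}\,y\,z_{2k-1}\,x$ visits $2k+1$ distinct vertices and every required edge is present). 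For the triangles, the paper invokes the Gy\H{o}ri--Li bound $t_3(G)=O_k(n^{1+1/k})$, which is the sole source of the $\beta_k n^{1+1/2k}$ error term; your elementary bound $3t_3(G)=\sum_v e(G[N(v)])\le (k-1)\sum_v d(v)=2(k-1)e(G)$ via Erd\H{o}s--Gallai on each $P_{2k}$-free neighbourhood is weaker as a triangle count but entirely sufficient here, and yields an $O_k(n)$ error instead. Finally, the paper sums $\binom{d(x,y)}{2}$ over non-edges (two layers of convexity), which is why $\alpha(k,t)=M(M-1)$ with $M=(2k-2)(t-1)$ enters, whereas your single-layer cherry count $\sum_v\binom{d(v)}{2}\le M\binom{n}{2}+2(k-1)e(G)$ gives leading constant $\tfrac12\sqrt{M}$; since $\sqrt{M}\le(\sqrt{M(M-1)}+1)^{1/2}$, your bound actually implies the stated one with room to spare, so there is no need to ``extract the precise constant $(\alpha^{1/2}+1)^{1/2}/2$'' as you worried --- your argument proves a slightly sharper theorem. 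The only caveat is presentational: you should state explicitly that $\tfrac12\sqrt{M}\,n^{3/2}+O_k(n)$ is dominated by the claimed right-hand side, rather than trying to reproduce the paper's exact constant.
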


Observe that \eqref{simple lower bound} gives a lower bound on
$\exi{n}{C_{2k+1}}{K_{2,t}}$ since $C_{2k+1}$ is not bipartite. Therefore, Theorem \ref{odd cycles} is sharp in both order of magnitude and its dependence on $t$. We leave open the question of whether Theorem \ref{odd cycles} gives the correct growth rate as a function of $k$.


\subsection{Notation and organization}

Let the Ramsey number $R(s,t)$ denote the smallest $n$ such that in any red
and blue coloring of the edges of $K_n$, there is either a red $K_s$ or a
blue $K_t$.  We write $t_m(G)$ for the number of complete subgraphs of $G$
that have exactly $m$ vertices.  An independent set of order $s$ is called
an \emph{$s$-independent set}. We define
\[
\mathcal{I}_s (G) = \{ \{x_1 ,  \dots , x_s \} \subset V(G) : x_1 ,  \dots
, x_s ~ \mbox{are distinct and non-adjacent in}~G \}.
\]
Similarly, a clique of order $m$ is called an \emph{$m$-clique}, and
$\mathcal{K}_m (G)$ denotes the set of all $m$-cliques in $G$.  Given a set
of vertices $\{x_1 , \dots , x_s \} \subset V(G)$, we write $N(x_1 , \dots
, x_s )$ to denote the vertices in $G$ that are adjacent to every vertex in
the set $\{x_1 , \dots , x_s \}$, and we let 
\[
d(x_1 , \dots , x_s ) = | N (x_1 , \dots , x_s ) |.
\]
We write $\Gamma ( x_1 , \dots , x_s )$ for the subgraph of $G$ induced by
the vertices in $N(x_1 , \dots , x_ s)$.  Lastly,  $\overline{H}$ denotes
the complement of the graph $H$.

This paper is organized as follows. We prove our two main results,
Theorems \ref{main theorem} and \ref{counting cliques}, in Sections
\ref{main theorem section} and \ref{counting cliques section}. Theorem
\ref{cliques and k2t} is proved in Section \ref{cliques and k2t}. We 
prove Theorem \ref{odd cycles} 
in Subsection \ref{H a cycle section}. The final section contains some concluding remarks
and open problems.

\section{The number of edges in $H$-free graphs with no induced $K_{s,t}$}\label{main theorem section}

In this section, let $r,s$, and $t$ be fixed positive integers.  Let $G$ be
an $n$-vertex graph with no copy of $K_r$ as a subgraph and no copy of
$K_{s,t}$ as an induced subgraph. We will prove Theorem \ref{main theorem},
showing that $e(G) = O\left(n^{2-1/s}\right)$, where the implied constant
depends on $r,s$, and $t$. First we give a short proof using dependent random choice, which gives a slightly worse constant than in Theorem \ref{main theorem}.

\begin{lemma}[Dependent Random Choice Lemma \cite{drc}]\label{drc lemma}
Let $a, r, s$ be positive integers and 
$G$ be a graph with $n$ vertices and average degree $d$.
If there is a positive integer $t$ such that
\begin{equation}\label{drc ineq}
\frac{d^t}{n^{t-1}} - \binom{n}{s} \left(\frac{r}{n}\right)^t \geq a,
\end{equation}
then $G$ contains a subset $A$ of at least $a$ vertices such that every set of 
$s$ vertices in $A$ has at least $r$ common neighbors.
\end{lemma}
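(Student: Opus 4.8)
The plan is to run the standard \emph{dependent random choice} argument: pick a short random sequence of vertices, pass to its common neighborhood, and show that in expectation this common neighborhood is large while containing few ``bad'' $s$-subsets — those with fewer than $r$ common neighbors — so that deleting one vertex from each bad subset still leaves $a$ vertices with the desired property.

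First I would choose vertices $v_1, \dots, v_t \in V(G)$ independently and uniformly at random (with repetition allowed), and set $A = N(v_1) \cap \cdots \cap N(v_t)$. For the size of $A$, observe that $\Pr[v \in A] = (d(v)/n)^t$ for each $v \in V(G)$, so $\mathbb{E}|A| = \sum_{v} (d(v)/n)^t$; since $x \mapsto x^t$ is convex, this is at least $n(d/n)^t = d^t/n^{t-1}$, where $d$ is the average degree of $G$. Next, call an $s$-set $S \subseteq V(G)$ \emph{bad} if it has at most $r-1$ common neighbors, and let $Y$ count the bad $s$-subsets of $A$. For a fixed $s$-set $S$, the event $S \subseteq A$ holds precisely when every $v_i$ is a common neighbor of $S$, which has probability $(|N(S)|/n)^t$; summing over bad $S$ gives $\mathbb{E}Y \le \binom{n}{s}((r-1)/n)^t < \binom{n}{s}(r/n)^t$. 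By linearity of expectation, $\mathbb{E}[\,|A| - Y\,] \ge d^t/n^{t-1} - \binom{n}{s}(r/n)^t \ge a$ by hypothesis \eqref{drc ineq}, so some outcome achieves $|A| - Y \ge a$. Fixing such a choice of $v_1,\dots,v_t$ and deleting one vertex from each bad $s$-subset of $A$ yields a set of at least $a$ vertices in which every $s$-set has at least $r$ common neighbors in $G$, as required.

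This is essentially a textbook application of the method (see \cite{drc}); the only steps needing any care are the convexity bound for $\mathbb{E}|A|$ and checking that the final deletion destroys all bad subsets simultaneously, which it does because we remove a vertex from \emph{every} bad $s$-subset. I do not expect a genuine obstacle here — the lemma is a black-box tool, and the real work in this section will be verifying the hypothesis \eqref{drc ineq} and then exploiting the resulting set $A$ together with the $K_r$-free and induced-$K_{s,t}$-free conditions.
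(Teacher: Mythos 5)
The paper does not actually prove this lemma; it is stated as a black-box tool and attributed to the Fox--Sudakov survey \cite{drc}. Your argument is the standard dependent random choice proof (choose $v_1,\dots,v_t$ uniformly and independently, pass to the common neighborhood, bound $\mathbb{E}|A|$ from below by convexity and the expected number of bad $s$-subsets from above, then delete one vertex per bad subset), and it is correct and identical to the proof in the cited reference, so nothing is missing.
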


\begin{proof}
Let $G$ be an $n$-vertex graph with no copy of $K_r$ and 
$e(G) \geq c n^{2 - 1/s}$.  We must show 
that $G$ contains an induced $K_{s,t}$. 
Writing $d$ for the average degree of $G$, we have 
\begin{align*}
\frac{d^s}{n^{s-1}} - \binom{n}{s} \left(\frac{R(r,t)}{n}\right)^s
& \geq
\frac{(2cn^{1-1/s})^s}{n^{s-1}} - \left(\frac{en}{s}\right)^s \left(\frac{R(r,t)}{n}\right)^s \\
& =
(2c)^s - \left(\frac{e R(r,t)}{s}\right)^s \\
& \geq
R(r,s).
\end{align*}
The last inequality holds provided $c$ is large as a function of $r$, $s$, and $t$.  
We conclude that there is a set of at least $R(r,s)$ vertices, say $A$, such that 
every set of $s$ vertices in $A$ have at least $R(r,t)$ common neighbors.  
Since $G$ has no $K_r$, $G[A]$ contains an independent
set $S$ of size $s$.  Furthermore, the vertices in $S$ have at least $R(r,t)$ common neighbors.
Again, since $G$ has no $K_r$, $G[N(S)]$ contains an independent
set $T$ of size $t$.
Therefore, $G[S \cup T]$ is an induced copy of $K_{s,t}$.
\end{proof}

Now we give a full proof of Theorem \ref{main theorem}. The proof will rely on an upper bound on the
number of cliques of a fixed order in $G$, for which we will apply Theorem
\ref{counting cliques}. We will delay the proof of Theorem \ref{counting
cliques} to Section \ref{counting cliques section}. We will need the
following claim which also counts cliques. A much stronger version is given
by Conlon in \cite{conlon}, but we only need a weaker version that can be
proved using an elementary counting argument of Erd\H{o}s \cite{erdos}.
\medskip
\begin{lemma}\label{ramsey multiplicity}
If $F$ is a graph on $n > 2\cdot 4^s$ vertices, then 
\[
t_s(F) + t_s(\overline{F}) \geq \frac{n^s}{2^s 4^{s^2}}.
\]
\end{lemma}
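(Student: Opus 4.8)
The plan is to run the standard supersaturation (double-counting) argument on top of the Erd\H{o}s--Szekeres bound for the diagonal Ramsey number; this is the ``elementary counting argument of Erd\H{o}s'' referred to above. View the pair $(F,\overline{F})$ as a red/blue colouring of $E(K_n)$, and call an $s$-subset of $V(F)$ \emph{monochromatic} if it induces an $s$-clique in $F$ or an $s$-clique in $\overline{F}$; then the quantity to be bounded below is exactly $M := t_s(F) + t_s(\overline{F})$, the number of monochromatic $s$-subsets.

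First I would fix $m = 4^s$ and record two facts about it. By Erd\H{o}s--Szekeres, $R(s,s) \le \binom{2s-2}{s-1} \le 2^{2s-2} = 4^{s-1} \le m$, so \emph{every} $m$-subset $W \subseteq V(F)$ contains at least one monochromatic $s$-subset; and since $n > 2\cdot 4^s$ we have $m < n$, so such $W$ exist. Next I would double count the pairs $(S,W)$ with $W$ an $m$-subset of $V(F)$ and $S \subseteq W$ a monochromatic $s$-subset: each of the $\binom{n}{m}$ choices of $W$ contributes at least one such $S$, while each fixed monochromatic $s$-subset lies in exactly $\binom{n-s}{m-s}$ of the sets $W$. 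Hence $M\binom{n-s}{m-s} \ge \binom{n}{m}$, and by the ``subset of a subset'' identity $\binom{n}{m}\binom{m}{s} = \binom{n}{s}\binom{n-s}{m-s}$ this rearranges to $M \ge \binom{n}{s}/\binom{m}{s}$.

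Finally I would estimate the two binomial coefficients crudely: $\binom{m}{s} \le m^s/s!$, and since $n > 2\cdot 4^s$ forces $n - s + 1 \ge n/2$, we get $\binom{n}{s} \ge (n-s+1)^s/s! \ge (n/2)^s/s!$. Dividing yields $M \ge (n/2)^s/m^s = n^s/(2^s 4^{s^2})$, which is the claimed bound. There is no real obstacle here; the only point requiring a little care is to check that the hypothesis $n > 2\cdot 4^s$ supplies exactly what the two estimates need, namely $m = 4^s < n$ and $n-s+1 \ge n/2$, while everything else is the textbook Ramsey bound together with elementary manipulation of binomial coefficients.
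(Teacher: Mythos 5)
Your proof is correct and follows essentially the same route as the paper: both double count monochromatic $s$-sets inside sets of size $4^s \ge R(s,s)$, obtain the lower bound $\binom{n}{4^s}/\binom{n-s}{4^s-s}$, and then use $n > 2\cdot 4^s$ to extract the stated polynomial bound. The only cosmetic difference is that you pass through the identity $\binom{n}{m}/\binom{n-s}{m-s} = \binom{n}{s}/\binom{m}{s}$ before estimating, whereas the paper bounds the ratio of binomials directly.
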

\begin{proof}
Since it is well known that $R(s,s) < 4^s$, any set of
$4^s$ vertices in $V(F)$ must contain a clique of order $s$ in either $F$
or $\overline{F}$.  Each set of $s$ vertices is contained in
$\binom{n-s}{4^s-s}$ sets of order $4^s$. Therefore,
\[
t_s(F) + t_s(\overline{F}) \geq \frac{\binom{n}{4^s}}{\binom{n-s}{4^s-s}} > \frac{\left(n-4^s\right)^s}{\left(4^s\right)^s} > \frac{n^s}{2^s4^{s^2}}
\]
where in the last inequality we have used the assumption that $n > 2 \cdot 4^s$.  
\end{proof}

\medskip

\begin{proof}[Proof of Theorem \ref{main theorem}]
Let $G$ be an $n$-vertex graph that is $K_r$-free and has no induced copy of $K_{s,t}$.  
We must show that $e(G) < c n^{2  - 1/s}$ where $c$ is a constant depending only on $r$, $s$, and $t$.  
By repeatedly removing vertices of degree less than $2\cdot 4^s$, we may assume that $G$ has minimum degree at least $2\cdot 4^s$ without loss of generality. In particular, 
when the minimum degree is at least $2 \cdot 4^s$ we can apply Lemma \ref{ramsey multiplicity} 
to the graph $\Gamma (v)$ for any $v \in V(G)$.  We now proceed with the main part of the proof of Theorem \ref{main theorem}.      

Since $G$ does not contain an induced copy of $K_{s,t}$, an $s$-independent set cannot contain a $t$-independent set 
in its common neighborhood.  Also, no set of vertices can contain a clique
of order $r$ in its neighborhood since $G$ is $K_r$-free.
We conclude that for any $\{x_1,\dots, x_s\} \in \mathcal{I}_s(G)$,
\[
d(x_1,\dots, x_s) \leq R(r,t). 
\]
Therefore, using the Erd\H{o}s--Szekeres \cite{erdos-szekeres} bound $R(r,t)
\leq \binom{r+t-2}{t-1}$,
\begin{equation}\label{upper bound over I}
\sum_{\{x_1, \dots , x_s\} \in \mathcal{I}_s(G)} d(x_1,\dots, x_s) \leq \binom{n}{s} R(r,t) < (r+t)^t n^s.
\end{equation}
On the other hand, we may double count to see that
\[
\sum_{\{x_1,\dots, x_s\}\in \mathcal{I}_s(G)} d(x_1,\dots x_s) = \sum_{v\in V(G)} t_s\left(\overline{\Gamma(v)}\right).
\]
Using Lemma \ref{ramsey multiplicity} and then convexity, we get
\begin{align*}
\sum_{ \{x_1 , \dots , x_s \}  \in \mathcal{I}_s(G) } d(x_1 , \dots , x_s) 
&\geq
\sum_{v \in V(G) } \left( \frac{ d(v)^s }{ 2^s 4^{s^2} } - t_s ( \Gamma (v) ) \right) \\
&\geq \frac{n}{2^s 4^{s^2} }\left(  \frac{1}{n} \sum_{v \in V(G) } d(v) \right)^s - \sum_{v \in V(G) } t_s ( \Gamma (v) ) 
\\
&=
\frac{n}{2^s 4^{s^2} } \left( \frac{ 2 e(G) }{n} \right)^s - (s + 1) t_{s+1} (G) 
\\
&=
\frac{ ( e(G)) ^s }{ n^{s-1} 4^{s^2} } - (s + 1) t_{s+1}(G).
\end{align*}
This inequality, together with (\ref{upper bound over I}), gives
\begin{equation*}
( r + t)^t n^s \geq \frac{ ( e(G)) ^s }{ n^{s-1} 4^{s^2} } - (s + 1) t_{s+1}(G).
\end{equation*}
By Theorem \ref{counting cliques}, 
\[
(s+1) \cdot t_{s + 1} (G) \leq 2 ( t + r)^{ t(s+1) / s} ( r + s)^s n^s + (r + s)^s n^s
\]
and this estimate, together with the previous inequality, gives the result.   
\end{proof}



\section{Clique counting with forbidden induced subgraphs}\label{counting cliques section}

As in the previous section, $r$, $s$, and $t$ are positive integers.  In
this section we prove our upper bound on the number of $m$-cliques in any
$n$-vertex, $K_r$-free graph with no induced copy of $K_{s,t}$.

Let $G$ be an $n$-vertex graph that is $K_r$-free and has no $K_{s,t}$ as an induced subgraph.  
We will write $\mathcal{I}_s$ for $\mathcal{I}_s
(G)$ and $\mathcal{K}_{m-1}$ for $\mathcal{K}_{m-1}(G)$.  Consider the set
of pairs 
\[
S:= \left\{ ( \{ x_1,\dots, x_s \}, v) : \{x_1,\dots, x_s\} \in \mathcal{I}_s, v\in \Gamma(x_1, \dots, x_s)\right\}.
\]
As observed in the proof of Theorem \ref{main theorem}, 
the common neighborhood of an $s$-independent set has no $t$-independent set or an $r$-clique. 
Therefore,
\begin{equation}\label{S upper bound}
|S| = \sum_{ \{ x_1,\dots, x_s \} \in \mathcal{I}_s} d(x_1,\dots, x_s) \leq \binom{n}{s} R(t, r) < (t+r)^t n^s.
\end{equation}

To give a lower bound on $|S|$, we count from the perspective of $(m-1)$-cliques with $s$-independent sets in their neighborhood.

\begin{lemma}\label{counting s sets}
If $\{x_1,\dots, x_{m-1}\} \in \mathcal{K}_{m-1}$ and $d(x_1,\dots, x_{m-1}) > R(r-m+1, s)$, then the number of $s$-independent sets in $\Gamma(x_1,\dots, x_{m-1})$ is at least 
\[
\left(\frac{d(x_1,\dots, x_{m-1})}{2(r+s)^s}\right)^s
\]
\end{lemma}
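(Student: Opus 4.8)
The plan is to reduce the claim to a clean lower bound on the number of $s$-independent sets in an arbitrary $K_{r'}$-free graph, where $r' := r - m + 1$. Write $\Gamma := \Gamma(x_1,\dots,x_{m-1})$ and $d := d(x_1,\dots,x_{m-1}) = |V(\Gamma)|$. Since $\{x_1,\dots,x_{m-1}\}$ is a clique and every vertex of $\Gamma$ is adjacent to all of $x_1,\dots,x_{m-1}$, any copy of $K_{r'}$ inside $\Gamma$ would extend, by adjoining $x_1,\dots,x_{m-1}$, to a copy of $K_r$ in $G$; as $G$ is $K_r$-free, $\Gamma$ contains no $K_{r'}$. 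We may assume $m \le r$ (otherwise $\mathcal{K}_{m-1} = \emptyset$ and there is nothing to prove), and in fact $r' \ge 2$: if $r' = 1$ then $\Gamma$ is edgeless with $d = 0$, contradicting $d > R(1,s) = 1$. In particular $R(r',s) \ge R(2,s) = s$, which we use below to keep denominators positive.

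Next I would record an elementary double count valid for any $K_{r'}$-free graph $\Gamma$ on $d > R(r',s)$ vertices. Put $R := R(r',s)$. By the definition of the Ramsey number, every $R$-element subset of $V(\Gamma)$ contains an $s$-independent set, while each $s$-independent set of $\Gamma$ sits inside exactly $\binom{d-s}{R-s}$ such subsets. Counting incidences between $R$-subsets and the $s$-independent sets they contain gives
\[
|\mathcal{I}_s(\Gamma)| \;\ge\; \frac{\binom{d}{R}}{\binom{d-s}{R-s}} \;=\; \prod_{i=0}^{s-1}\frac{d-i}{R-i}.
\]

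The one slightly non-obvious step is to estimate this product by pairing factors rather than bounding numerator and denominator separately: since $d > R$ and $R > i$ for every $0 \le i \le s-1$ (here $R \ge s$ is used), each factor satisfies $\frac{d-i}{R-i} \ge \frac{d}{R}$, so $|\mathcal{I}_s(\Gamma)| \ge (d/R)^s$. This is exactly what makes the sole hypothesis $d > R(r',s)$ enough, with no lower bound on $d$ in terms of $s$ required. To finish, the Erd\H{o}s--Szekeres bound gives $R = R(r',s) \le \binom{r'+s-2}{s-1} \le (r'+s)^s \le (r+s)^s \le 2(r+s)^s$, and hence
\[
|\mathcal{I}_s(\Gamma)| \;\ge\; (d/R)^s \;\ge\; \left(\frac{d}{2(r+s)^s}\right)^s,
\]
which is the assertion, in fact with a factor of $2$ to spare.

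I do not expect a serious obstacle: everything is a one-line double count followed by the factor-pairing estimate, and the only care needed is in disposing of the degenerate ranges $m \ge r$ and in confirming $R(r',s) \ge s$ so that all the falling-factorial factors stay positive.
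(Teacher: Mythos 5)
Your proof is correct and follows essentially the same route as the paper: both observe that $\Gamma(x_1,\dots,x_{m-1})$ is $K_{r-m+1}$-free, so every $R(r-m+1,s)$-subset of it contains an $s$-independent set, and then double count to get $|\mathcal{I}_s(\Gamma)| \ge \binom{d}{R}/\binom{d-s}{R-s}$. The only divergence is the final estimate: the paper bounds this ratio below by $(d-s)^s/R(r-m+1,s)^s$ and then uses $d-s>d/2$ (which tacitly requires $d>2s$), whereas your factor-pairing bound $\prod_i \frac{d-i}{R-i}\ge (d/R)^s$ is slightly cleaner, needs only $d>R$, and even leaves a factor of $2$ to spare.
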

\begin{proof}
If $\{ x_1,\dots, x_{m-1} \}$ forms a clique in $G$, then its neighborhood
can have no clique of order $r-m+1$. Thus, every set of order $R(r-m+1,s)$
in its neighborhood must contain an $s$-independent set. Now, any set of
order $s$ in $\Gamma(x_1,\dots, x_{m-1})$ is contained in at most
$\binom{d(x_1,\dots, x_{m-1})-s}{R(r-m+1, s)-s}$ sets of order $R(r-m+1, s)$. Therefore, 
\[
| \mathcal{I}_s ( \Gamma( x_1 , \dots , x_{m-1} ) ) |  \geq \frac{\binom{d(x_1,\dots, x_{m-1})}{R(r-m+1,s )}}{\binom{d(x_1,\dots, x_{m-1}) -s}{R(r-m+1,s)-s}} \geq \frac{(d(x_1,\dots, x_{m-1})-s)^s}{R(r-m+1, s)^s}.
\]

Using the estimates $d(x_1 , \dots , x_{m-1}) - s > d(x_1,\dots , x_{m-1})/2$ and $R(r-m+1,s) < (s+r)^s$ proves the claim.
\end{proof}

\bigskip

With Lemma \ref{counting s sets}, we are now ready to prove Theorem \ref{counting cliques}

\begin{proof}[Proof of Theorem \ref{counting cliques}]
Let $G$ be an $n$-vertex graph with no $K_r$ and no induced $K_{s,t}$.  
The vertices in an $s$-independent set have at most $R(t,r)$ common neighbors.  Thus,
each $s$-independent set may be contained in the common neighborhood of at
most 
\[
\binom{R(t,r)}{m-1} < (r+t)^{t(m-1)}
\]
$(m-1)$-cliques.   
Let $B \subset \mathcal{K}_{m-1}$ be the $(m-1)$-cliques in $G$ where 
the vertices of each $(m-1)$-clique in $B$ have more than $R(r-m+1,s)$ common neighbors. We have 
\begin{align*}
|S| & \geq \frac{1}{(r+t)^{t(m-1)}} \sum_{\{x_1,\dots, x_{m-1}\} \in B} 
| \mathcal{I}_s ( \Gamma (x_1 , \dots , x_{m-1} ) ) | \\
&\geq \frac{1}{(r+t)^{t(m-1)}} \sum_{\{x_1,\dots, x_{m-1}\}\in B} \left(\frac{d(x_1,\dots, x_{m-1})}{2(r+s)^s}\right)^s   \\
& \geq \frac{1}{(r+t)^{t(m-1)}} \frac{\left(\sum_B d(x_1,\dots, x_{m-1})\right)^s}{2^s(r+s)^{s^2}|B|^{s-1}}  
\end{align*}
where the second inequality is by Lemma \ref{counting s sets} and the third
is by convexity.  Since 
\[
mt_m(G) =\sum_{\{x_1,\dots, x_{m-1}\}\in \mathcal{K}_{m-1}} d(x_1,\dots , x_{m-1}),
\]
we have
\begin{align*}
\sum_{\{x_1,\dots, x_{m-1} \}\in B} d(x_1,\dots, x_{m-1}) &\geq mt_m(G) - \binom{n}{m-1} R( r-m+1,s)
 \\ &\geq mt_m(G) - (r+s)^s n^{m-1} .
\end{align*}
Combining this inequality with our lower bound on $|S|$ and the trivial inequality $|B| < n^{m-1}$ gives  
\begin{equation}\label{S lower bound}
|S| \geq \frac{ \left(mt_m(G) - (r+s)^sn^{m-1}\right)^s}{(r+t)^{t(m-1)}2^s(r+s)^{s^2}n^{(s-1)(m-1)}}.
\end{equation}
Combining \eqref{S upper bound} and \eqref{S lower bound} finishes the
proof of Theorem \ref{counting cliques}.
\end{proof}


\section{Sharper results for $\boldsymbol{K_{2,t+1}}$}\label{cliques and k2t section}

In this section we prove Theorem \ref{cliques and k2t} and Corollary
\ref{cor}.  We must show that a graph with $n$ vertices, minimum degree
$d$, and no induced $K_{2,t+1}$ must have a clique of order at least $(1 - o
(1)) \left( \frac{d^2}{2nt} \right)^{1/t}-t$.  Our argument extends the
methods of Gy\'{a}rf\'{a}s, Hubenko, and Solymosi \cite{ghs}.  

\bigskip

\begin{proof}[Proof of Theorem \ref{cliques and k2t}]
Let $G$ be a graph with $n$ vertices, minimum degree $d$, and no induced 
copy of $K_{2,t+1}$.  Let $\alpha = \alpha (G)$ and let 
$S$ be an independent set of size $\alpha$, say $S = \{x_1 , \dots , x_{ \alpha } \}$. 
Let $B_i$ be the vertices in $G$ whose only neighbor in $S$ is $x_i$.  
Let $B_{i,j}$ be the vertices in $G$ adjacent to both $x_i$ and $x_j$ (and possibly 
other vertices of $S$).  Since $S$ is an independent set with the maximum 
number of vertices, each $B_i$ is a clique and 
so $\{x_i \} \cup B_i$ is a clique.  
Also, 
\begin{equation}\label{union}
V(G) = \left( \bigcup_{i = 1}^{ \alpha } ( \{x_i \} \cup B_i )    \right) \bigcup 
\left( \bigcup_{1 \leq i < j \leq \alpha } B_{i,j} \right)
\end{equation}
otherwise we could create a larger independent set by adding a vertex to $S$.  
If $| \{x_i \} \cup B_i | \geq \left( \frac{ d^2}{2nt} \right)^{1/t}$ for some 
$i \in \{1,2, \dots , \alpha \}$, then we are done as 
$\{ x_i \} \cup B$ is a clique.  Assume that this is not the case.
By (\ref{union}), 
\[
n \leq \alpha \left( \frac{d^2}{ 2nt } \right)^{1/t} + \sum_{1 \leq i < j \leq \alpha} |B_{i,j} | .
\]
By averaging, there is a pair $1 \leq i < j \leq \alpha$ such that  
\[
|B_{i,j} | \geq \frac{ n - \alpha \left( \frac{d^2}{ 2nt } \right)^{1/t} }{ \binom{\alpha }{2} }.
\]
The set $B_{i,j}$ cannot contain a $(t+1)$-independent set otherwise we have an induced $K_{2,t+1}$ 
using the vertices $x_i$ and $x_j$.  If $w$ is any integer for which 
\begin{equation}\label{revised eq1}
\frac{ n - \alpha \left( \frac{ d^2 }{ 2nt } \right)^{1/t} }{ \binom{ \alpha }{2} } \geq R( t + 1 , w ),
\end{equation}
then $B_{i,j}$ contains a clique with $w$ vertices.  

If $\alpha (G) < \frac{2n}{d}$, then a short calculation gives  
\[
 \frac{ n - \alpha \left( \frac{d^2}{ 2nt } \right)^{1/t} }{ \binom{\alpha }{2} }
 \geq 
 \frac{ d^2}{2n} \left( 1 - \frac{2}{d} \left( \frac{d^2}{2nt } \right)^{1/t} \right)
  \geq
 \frac{d^2}{2n} \left( 1 - \frac{2}{ n^{1/t} } \right).
 \]
The second inequality holds since if $t \geq 2$, then 
$\frac{2}{d} \left( \frac{d^2}{2n} \right)^{1/t} \leq \frac{2}{ n^{1/t} }$. 
By the Erd\H{o}s--Szekeres bound on Ramsey numbers, $R( t + 1, w) < ( t + w -1)^t$ so that if $w$ is an integer for 
which 
\[
 \frac{d^2}{2n} \left( 1 - \frac{2}{ n^{1/t} } \right)  \geq ( w + t-1)^t,
 \]
then $B_{i,j}$ contains a clique of size $w$.  We conclude that in the case when $\alpha (G) < \frac{2n}{d}$, we have 
\[
\omega (G) \geq \left( \frac{ d^2}{2n} \left( 1 - \frac{2}{ n^{1/t} } \right) \right)^{1/t } - t.
\]
 
Now assume that $\alpha (G) \geq \frac{2n}{d}$.  Let $b = \frac{2n}{d}$ and 
let $\{ x_1 , \dots , x_b \}$ be an independent set.  
If $m = \max_{i \neq j } | N(x_i ,x_j) |$, then
\[
b d - \binom{b}{2} m \leq \sum_{i=1}^{b} | N(x_i) | - \sum_{1 \leq i < j \leq b } | N(x_i ,x_j) | \leq \left| \bigcup_{i=1}^{b} N(x_i) \right| \leq n
\]
which implies 
\[
m \geq \frac{ bd - n }{ \binom{b}{2} } .
\]
Fix a pair $1 \leq i < j \leq b$ with $| N(x_i ,x_j) | = m$.  If  
$N(x_i ,x_j)$ contains an independent set of order $t+1$, then we get an induced $K_{2,t+1}$.  
As before, if $w$ is any integer for which 
\[
\frac{ bd -n }{ \binom{b}{2} } \geq R( t + 1 , w),
\]
then $N(x_i, x_j)$ contains a clique with $w$ vertices.  Since $b = \frac{2n}{d}$, 
we have 
\[
\frac{bd - n }{ \binom{b}{2} } \geq \frac{d^2}{2n},
\]
and so $\frac{d^2}{2n} \geq (w + t - 1 )^t$ implies that $\omega (G) \geq w$.  We conclude that in the case when 
$\alpha (G) \geq \frac{2n}{d}$, 
\[
\omega (G) \geq \left( \frac{d^2}{2n} \right)^{1/t} - t.
\]
This completes the proof of Theorem \ref{cliques and k2t}.
\end{proof}


\subsection{Forbidding an odd cycle}\label{H a cycle section}

In this section we prove Theorem \ref{odd cycles}.  We must show that for integers $k \geq 2$ and $t \geq 2$, 
any $n$-vertex $C_{2k+1}$-free graph with no induced $K_{2,t}$  has at most 
\[
\left(  \alpha( k , t)^{1/2} + 1  \right)^{1/2}  \frac{ n^{3/2} }{2}  + \beta_k n^{1 + 1/2k}
\]
edges where $\alpha(k,t) = ( 2k - 2)(t - 1) ( ( 2k - 2)(t - 1) - 1)$.  

\bigskip

\begin{proof}[Proof of Theorem \ref{odd cycles}]
Suppose $G$ is a $C_{2k+1}$-free graph with $n$ vertices and no induced copy of $K_{2,t}$.  
For any pair of distinct non-adjacent vertices $x$ and $y$, the common neighborhood $N(x, y ) $ cannot 
contain a path of length $2k - 1$ or an independent set of order $t$.  A classical result of Erd\H{o}s and Gallai is that 
any graph with at least $(a - 1)(b - 1) + 1$ vertices must contain a path
of length $a$ or an independent set of order $b$
(see Parsons \cite{parsons}).  
Therefore, 
\begin{equation}\label{eq 1.1}
d( x,y)  \leq ( 2k-2)(t - 1).
\end{equation}
Let $\overline{e} = \binom{n}{2} - e(G)$.  By convexity and 
(\ref{eq 1.1}), 
\begin{equation}\label{eq 1.2}
\frac{ \alpha( k , t) }{2} \binom{n}{2} \geq \sum_{ \{x , y \} \notin E(G) } 
\binom{ d(x,y) }{2} 
\geq 
\overline{e} 
\binom{
\frac{1}{ \overline{e} } \sum_{ \{x , y \} \notin E(G) } d(x,y) }{2}
\end{equation}
where $\alpha (k,t) := ( 2k - 2) ( t - 1)( (2k-2)(t-1) - 1)$.  
Note that 
\begin{equation}\label{eq 1.25}
\sum_{ \{x,y \} \notin E(G) } d(x,y) = \sum_{z \in V(G) } \left( \binom{d(z)}{2} - e ( \Gamma (z) ) \right) = 
\sum_{z \in V(G) } \binom{ d(z) }{2} - 3 t_3 (G).
\end{equation}
By convexity, 
\begin{equation}\label{eq 1.3}
\sum_{z \in V(G) } \binom{d(z)}{2} \geq n \binom{ 2e / n }{2}
\end{equation}
where $e$ is the number of edges of $G$.  By a result of Gy\"{o}ri and Li \cite{gl}, since 
$G$ is $C_{2k+1}$-free the number of triangles in $G$ is at most
$( c_k / 3)  n^{1 + 1/k} $.  
Here $c_k$ is a constant depending only on $k$.  
This fact, together with  
(\ref{eq 1.25}) and (\ref{eq 1.3}), give
\[
\sum_{ \{ x, y \} \notin E(G) } d(x,y) \geq n \binom{ 2e / n }{2} - c_k n^{1 + 1/k} .
\]
Combining this with (\ref{eq 1.2}) leads to 
\begin{align*}
\frac{ \alpha ( k , t) }{2} \binom{n}{2} 
&\geq
\overline{e} 
\binom{
\frac{1}{ \overline{e} } \left( 
n \binom{2e / n }{2} - c_k n^{1 + 1/k}  \right) }{2} \\
 &\geq 
\frac{ \overline{e} }{2} \left(  \frac{ n}{ \overline{e}}  \binom{ 2e / n }{2} - \frac{c_k n^{1 + 1/k} }{ \overline{e} }- 1 \right)^2 \\
&= \frac{1}{2 \overline{e} } \left( n \binom{2 e / n }{2} - c_k n^{1 + 1/k} - \overline{e} \right)^2.
\end{align*}
Using the trivial estimate $\overline{e} \leq \binom{n}{2}$, we have 
\[
\alpha ( k , t) \binom{n}{2}^2 \geq  \left( n \binom{2 e / n }{2} - c_k n^{1 + 1/k} - \overline{e} \right)^2.
\]
A straightforward calculation gives
\[
\left( \alpha (k , t)^{1/2} + 1 \right) \binom{n}{2} + c_k n^{1 + 1/k} \geq \frac{2e^2}{n}
\]
from which it follows that 
\[
 \left( \alpha(k , t )^{1/2} + 1 \right)^{1/2} \frac{ n^{3/2} }{ 2 } + \sqrt{ \frac{ c_k }{2} } n^{1 + 1/2k} \geq e.
\]
\end{proof}


\section{Concluding remarks}

Our bound
in Theorem \ref{counting cliques} is probably not tight, and although it served our purposes
in this paper, it is an independently interesting question (along the lines
of Alon and Shikhelman's problem in \cite{as}) to resolve its asymptotic
behavior.  Apart from its natural interest, another side effect of an
improvement could also potentially translate into a constant factor
improvement in the Erd\H{o}s-Hajnal problem for forbidden $K_{s,t}$. (The
conjecture in this case has been known since the original paper of Erd\H{o}s and Hajnal \cite{erdos-hajnal}, which covered the more general
case of \emph{cographs}.) In connection with the Erd\H{o}s-Hajnal conjecture, we note the following corollary of Theorem \ref{main theorem}, which complements the work of Gy\'arf\'as, Hubenko, and Solymosi in \cite{ghs} and Theorem \ref{cliques and k2t}. One could be more careful estimating Ramsey numbers in our proofs to obtain a slightly improved exponent.

\begin{corollary}
If $G$ has average degree $d$ and no copy of $K_{s,t}$ as an induced subgraph, then
\[
\omega(G) = \Omega\left(\left(\frac{d^s}{n^{s-1}}\right)^{\frac{s}{t(s+1)+ s^2}}\right).
\]
\end{corollary}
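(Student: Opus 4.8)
The plan is to apply Theorem \ref{main theorem} with $r = \omega(G)+1$ and invert the resulting inequality. Setting $r = \omega(G)+1$, the graph $G$ is $K_r$-free and still has no induced $K_{s,t}$, so Theorem \ref{main theorem} gives
\[
e(G) \le n^{2-1/s}4^s\left((r+t)^{t/s} + (r+s) + 2(r+t)^{t(s+1)/s^2}(r+s)\right) + 2\cdot 4^s n.
\]
Each of the three summands in the parentheses is at most $(r+s+t)^{(t(s+1)+s^2)/s^2}$ — up to the factor $2$ on the last one — since the relevant exponents $t/s$, $1$, and $t(s+1)/s^2+1 = (t(s+1)+s^2)/s^2$ are all at most $(t(s+1)+s^2)/s^2$ and $r+s+t \ge 1$. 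Hence the bracketed quantity is at most $4(r+s+t)^{(t(s+1)+s^2)/s^2}$.

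Substituting $e(G) = dn/2$ then yields
\[
\frac{dn}{2} \le 4\cdot 4^s\, n^{2-1/s}(r+s+t)^{(t(s+1)+s^2)/s^2} + 2\cdot 4^s n .
\]
If $d$ is bounded by an absolute constant, then $d^s/n^{s-1}$ is bounded and the asserted lower bound is $\Omega(1)$, which holds trivially since $\omega(G) \ge 1$. Otherwise $2\cdot 4^s n \le dn/4$, so the displayed inequality simplifies to $(r+s+t)^{(t(s+1)+s^2)/s^2} \ge c_{s,t}\, d\, n^{1/s-1}$ for a constant $c_{s,t} > 0$ depending only on $s$ and $t$. Raising both sides to the power $s^2/(t(s+1)+s^2)$ and using the identity $\big(d\, n^{1/s-1}\big)^{s^2/(t(s+1)+s^2)} = \big(d^s/n^{s-1}\big)^{s/(t(s+1)+s^2)}$, I obtain
\[
\omega(G) = r-1 \ge c_{s,t}'\left(\frac{d^s}{n^{s-1}}\right)^{\frac{s}{t(s+1)+s^2}} - (s+t+1),
\]
which, combined once more with $\omega(G) \ge 1$, gives $\omega(G) = \Omega\big((d^s/n^{s-1})^{s/(t(s+1)+s^2)}\big)$.

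There is no substantive obstacle here; the argument is essentially just inverting Theorem \ref{main theorem}. The only points requiring care are identifying which of the three terms governs the growth in $r$ — it is $2(r+t)^{t(s+1)/s^2}(r+s)$, which is what produces the exponent $\frac{s}{t(s+1)+s^2}$ — and handling the additive linear term together with the regime of bounded average degree so that the $\Omega$-bound is not vacuous. As the preceding remark indicates, feeding sharper estimates for the Ramsey numbers $R(r,t)$ and $R(r-m+1,s)$ through the proofs of Theorems \ref{main theorem} and \ref{counting cliques} would yield a somewhat larger exponent; the value $\frac{s}{t(s+1)+s^2}$ is an artifact of the crude Erd\H{o}s--Szekeres bounds used there.
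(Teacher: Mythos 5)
Your proposal is correct and is exactly the argument the paper intends: the corollary is stated without proof as an immediate consequence of Theorem \ref{main theorem}, obtained by taking $r=\omega(G)+1$ and inverting the edge bound, with the dominant term $2(r+t)^{t(s+1)/s^2}(r+s)$ producing the exponent $\frac{s}{t(s+1)+s^2}$. Your handling of the additive $2\cdot 4^s n$ term and the bounded-degree regime is careful and sound.
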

{\scriptsize $\blacksquare$}

It also remains open to estimate $\exi{n}{H}{F}$ with greater accuracy. The
bounds would likely depend on the structures of $H$ and $F$. The results
from the later sections of our paper start this investigation by proving
some bounds in the case of odd cycles and $K_{2,t}$-induced.  It would be interesting if the behavior of
this function is sometimes determined by natural parameters of $H$ and $F$,
as in the case of the ordinary Tur\'an problem.

Finally, we note that using the same technique as in the proof of Theorem \ref{odd cycles}, the main result of \cite{bg}, and a result of Maclaurin now called the Fisher--Ryan inequalities \cite{fr}, one can show
\[
\exi{n}{C_{2k+1}}{K_{s,s}} \leq \frac{4^s(s-1)^{1/s}(2k-3)^{1/s}}{(s!)^{1/s}}n^{2-1/s} + o(n^{2-1/s}),
\]
where $k\geq s\geq 3$. Whereas Theorem \ref{odd cycles} has the correct dependence on $t$, we do not know if the above equation has the correct dependence on $s$.




\begin{thebibliography}{10}


\bibitem{as}
N.\ Alon and C.\ Shikhelman,
Many $T$ copies in $H$-free graphs,
{\emph J. Combin. Theory Ser. B} \textbf{121} (2016), 146--172.  

\bibitem{balogh-bollobas-weinreich}
  J. Balogh, B. Bollob\'as, and D. Weinreich, The speed of hereditary
  properties of graphs, \emph{J. Combin. Theory Ser. B} \textbf{79} (2000),
  131--156.

\bibitem{bermond-etal} J.\ Bermond, J.\ Bond, M.\ Paoli, and C.\ Peyrat,
  Graphs and interconnection networks: diameter and vulnerability, in:
  Surveys in Combinatorics: Proceedings of the Ninth British Combinatorics
  Conference, \emph{London Math. Soc., Lec. Notes Ser.} \textbf{82} (1983),
  1--30.

\bibitem{boehnlein-jiang-induced}
  E.\ Boehnlein and T.\ Jiang, Set families with a forbidden induced
  subposet, \emph{Combinatorics, Probability and Computing}
  \textbf{21} (2012), 496--511.

\bibitem{bg}
B.\ Bollob\'{a}s, E.\ Gy\"{o}ri, 
Pentagons vs.\ triangles, 
{\em Discrete Math}.\ 308 (2008), no.\ 19, 4332--4336.  

\bibitem{bollobas-thomason}
  B. Bollob\'as and A. Thomason, Hereditary and monotone properties of
  graphs, \emph{The Mathematics of Paul Erd\H{o}s, II} \textbf{14} (1997),
  70--78.

\bibitem{chudnovsky}
  M. Chudnovsky, The Erd\H{o}s--Hajnal conjecture---a survey,
  \emph{Journal of Graph Theory} \textbf{75} (2014), 178--190.

\bibitem{chung-gyarfas-tuza-trotter}
  F.R.K. Chung, A. Gy\'arf\'as, Z. Tuza, and W.T. Trotter, The maximum
  number of edges in $2K_2$-free graphs of bounded degree, \emph{Discrete
  Mathematics} \textbf{81} (1990), 129--135.

\bibitem{chung-jiang-west}
  M. Chung, T. Jiang, and D. West, Induced Tur\'an problems: largest
  $P_m$-free graphs with bounded degree, submitted.

\bibitem{chung-west-2p3}
  M. Chung and D. West, Large $2P_3$-free graphs with bounded degree,
  \emph{Discrete Mathematics} \textbf{150} (1996), 69--79.

\bibitem{chung-west-p4}
  M. Chung and D. West, Large $P_4$-free graphs with bounded degree,
  \emph{Journal of Graph Theory} \textbf{17} (1993), 109--116.

\bibitem{conlon}
D.\ Conlon,
On the Ramsey multiplicity of complete graphs,
{\em Combinatorica}, Vol. 32, Issue 2, (2012), 171--186.


\bibitem{erdos2}
P.\ Erd\H{o}s, 
On extremal problems of graphs and generalized graphs,
{\em Israel J.\ Math}.\ {\bf 2} 1964 183--190.  

\bibitem{erdos}
P.\ Erd\H{o}s, On the number of complete subgraphs contained in certain graphs, 
{\em Publ.\ Math.\ Inst.\ Hung.\ Acad.\ Sci., VII, Ser.\ A} {\bf 3} (1962), 459--464.  


\bibitem{efr-count}
  P.\ Erd\H{o}s, P.\ Frankl, and V.\ R\"odl, The asymptotic number of
  graphs not containing a fixed subgraph and a problem for hypergraphs
  having no exponent, \emph{Graphs Combin.} \textbf{2} (1986), 113--121.

\bibitem{erdos-hajnal}
  P.\ Erd\H{o}s and A. Hajnal, Ramsey-type theorems, \emph{Discrete Applied
  Mathematics} \textbf{25} (1989), 37--52.

\bibitem{es}
P.\ Erd\H{o}s and M.\ Simonovits,
Compactness results in extremal graph theory,
{\em Combinatorica} {\bf 2} (1982), no.\ 3, 275--288.  

\bibitem{erdos-stone}
  P.\ Erd\H{o}s and A.\ Stone, On the structure of linear graphs,
  \emph{Bulletin of the American Mathematical Society} \textbf{52} (1946),
  1087--1091.

\bibitem{erdos-szekeres}
  P.\ Erd\H{o}s and G.\ Szekeres, A combinatorial problem in geometry,
  \emph{Comp. Math.} \textbf{2} (1935), 463--470.

\bibitem{fr}
D.\ Fisher, J.\ Ryan,
Bounds on the number of complete subgraphs, 
{\em Discrete Math}.\ 103 (1992), no.\ 3, 313--320.  

\bibitem{drc}
J.\ Fox, and B.\ Sudakov, 
Dependent random choice
{\em Random Structures and Algorithms} 38 (2011), no.\ 1-2, 68--99.

\bibitem{fur}
Z.\ F\"{u}redi, 
New asymptotics for bipartite Tur\'{a}n numbers,
{\em J.\ Combin.\ Theory Ser.\ A} 75 (1996), no.\ 1, 141--144.

\bibitem{fs}
Z.\ F\"{u}redi and M.\ Simonovits, 
The history of degenerate (bipartite) extremal graph problems, 
{\em Erd\H{o}s centennial}, 169-264, Bolyai Soc.\ Math.\ Stud., 25, J\'{a}nos Bolyai Math.\ Soc., Budapest, 2013.


\bibitem{ghs}
A.\ Gy\'{a}rf\'{a}s, A.\ Hubenko, and J.\ Solymosi,
Large cliques in $C_4$-free graphs,
{\em Combinatorica} 22 (2002), no.\ 2, 269--274. 

\bibitem{gl}
E.\ Gy\"{o}ri and H.\ Li, 
The maximum number of triangles in $C_{2k + 1}$-free graphs, 
{\em Combin.\ Probab.\ Comput}.\ 21 (2012), no.\ 1-2, 187--191.  

\bibitem{hhknr}
H.\ Hatami, J.\ Hladk\'y, D.\ Kr\'al', S.\ Norine, and A.\ Razborov,
On the number of pentagons in triangle-free graphs,
{\em J. Combin. Theory Ser. A} 120 (2013), 722--732. 


\bibitem{lrz}
Y.\ Li, C.\ Rousseau, and W.\ Zang, 
Asymptotic upper boundes for Ramsey functions,
{\em Graphs Combin}.\ 17 (2001), no.\ 1, 123--128.  

\bibitem{lu-milans-induced}
  L.\ Lu and K. Milans, Set families with forbidden subposets,
  \emph{Journal of Combinatorial Theory, Series A} \textbf{136} (2015),
  126--142.

\bibitem{parsons}
T.\ D.\ Parsons, 
The Ramsey numbers $r( P_m , K_n)$,
{\em Discrete Math.} {\bf 6} (1973), 159--162.  


\bibitem{promel-steger1} H. Pr\"omel and A. Steger, Excluding induced
  subgraphs I: quadrilaterals, \emph{Random Structures and Algorithms}
  \textbf{2} (1991), 55--71.

\bibitem{promel-steger2} H. Pr\"omel and A. Steger, Excluding induced
  subgraphs II: extremal graphs, \emph{Discrete Applied Mathematics}
  \textbf{44} (1993), 283--294.

\bibitem{promel-steger3} H. Pr\"omel and A. Steger, Excluding induced
  subgraphs III: a general asymptotic, \emph{Random Structures and
  Algorithms} \textbf{3} (1992), 19--31.

\bibitem{razborov-induced}
  A.\ Razborov, On 3-hypergraphs with forbidden 4-vertex configurations,
  \emph{SIAM J. Discrete Math.} \textbf{24} (2010), 946--963.

\bibitem{sidorenko}
A.\ Sidorenko, 
What we know and what we do not know about Tur\'an numbers, 
{\em Graphs Combin.} 11 (1995), no.\ 2, 179--199.

\bibitem{ss}
M.\ Simonovits and V.\ T.\ S\'os,
Ramsey--Tur\'an theory.
Combinatorics, graph theory, algorithms and applications, 
{\em Discrete Math.}.\ 229 (2001), no.\ 1-3, 293--340.

\bibitem{sos}
V.\ T.\ S\'os,
On extremal problems in graph theory,
in Proceedings of the Calgary International Conference on Combinatorial Structures and their Application, 1969, 407--410.

%


\bibitem{turan}
  P. Tur\'an, On an extremal problem in graph theory (in Hungarian),
  \emph{Mat. Fiz. Lapok} \textbf{48} (1941), 436--452.



\end{thebibliography}
\end{document}